\newtheorem{theorem}{Theorem}[section]
\theoremstyle{definition}
\newtheorem{Definition}[theorem]{Definition}
\newenvironment{theorem*}[1]{\medskip
                            \noindent
                            {\bf Theorem #1. }\ %
                            \begingroup \sl}
                            {\endgroup\medskip}
\newtheorem{Corollary}[theorem]{Corollary}
\newenvironment{dedication}
  {
   \thispagestyle{empty}
   \vspace*{\stretch{1}}
   \itshape             
   \raggedleft          
  }
  {\par 
   \vspace{\stretch{3}} 
   \clearpage           
  }
\title
{New large-cardinal axioms and the Ultimate-L program}
\author[$\mathrm{M^{\lowercase{c}}Callum}$ ]{\textbf{Rupert} $\mathbf{M^{\lowercase{c}}Callum}$ }
\begin{document}

\begin{abstract}
We will consider a number of new large-cardinal properties, the $\alpha$-tremendous cardinals for each limit ordinal $\alpha>0$, the hyper-tremendous cardinals, the $\alpha$-enormous cardinals for each limit ordinal $\alpha>0$, and the hyper-enormous cardinals. For limit ordinals $\alpha>0$, the $\alpha$-tremendous cardinals and hyper-tremendous cardinals have consistency strength between I3 and I2. The $\alpha$-enormous cardinals and hyper-enormous cardinals have consistency strength greater than I1. Ralf Schindler and Victoria Gitman have developed the notion of a virtual large-cardinal property, and a clear sense can be given to the notions of ``virtually $\alpha$-enormous" and ``virtually hyper-enormous". On the assumption that $V=HOD$, a measurable cardinal can be shown to be virtually hyper-tremendous. Using a definition of Ultimate-L given in Section 6, claimed to be the correct definition on the assumption that there is a proper class of hyper-enormous cardinals, it can be shown that, in the model Ultimate-L in the sense of that definition, a virtually $\omega$-enormous cardinal is a limit of Ramsey cardinals.

\bigskip

We apply this set of ideas in Section 4 to obtain a proof that $\mathsf{ZF}$+``There is a $\Sigma_1$-elementary embedding $j:V_{\lambda+3} \prec V_{\lambda+3}$" is inconsistent.

\bigskip

Finally, it is shown in Section 5 that the assertion that there is a proper class of hyper-enormous cardinals can be shown to imply a version of the Ultimate-L Conjecture. We close in Section 6 with concluding remarks.

\bigskip

\noindent Keywords: Ultimate-L program, large cardinals. MSC: 03E45, 03E55

\end{abstract}

\maketitle

\newpage

\begin{dedication}
To my beloved wife Mari Mnatsakanyan, without whom this work would not have been possible.
\end{dedication}

\section*{Acknowledgements}

Hugh Woodin, Gabriel Goldberg, and Farmer Schlutzenberg provided very helpful feedback on a number of early drafts of this work in which a number of unsatisfactory definitions of the notion of an $\alpha$-enormous cardinal were formulated, and I am very thankful for their assistance.

\bigskip

\section{Definitions of the new large-cardinal properties}

In what follows we will present a number of new large-cardinal axioms, and applications of them. Let us begin by presenting the definitions of the new large-cardinal properties to be considered.

\bigskip

\begin{Definition} Suppose that $\alpha$ is a limit ordinal such that $\alpha>0$. We say that an uncountable regular cardinal $\kappa$ is $\alpha$-tremendous if there exists an increasing sequence of cardinals $\langle \kappa_\beta : \beta < \alpha \rangle$ such that $V_{\kappa_{\beta}} \prec V_{\kappa}$ for all $\beta<\alpha$, and if $n>1$ and $\langle \beta_i : i<n \rangle$ is an increasing sequence of ordinals less than $\alpha$, then if $\beta_0 \neq 0$ then for all $\beta' < \beta_0$ there is an elementary embedding $j: V_{\kappa_{\beta_{n-2}}} \prec V_{\kappa_{\beta_{n-1}}}$ with critical point $\kappa_{\beta'}$ and $j(\kappa_{\beta'})=\kappa_{\beta_{0}}$ and $j(\kappa_{\beta_{i}})=\kappa_{\beta_{i+1}}$ for all $i$ such that $0 \leq i <n-2$, and if $\beta_0=0$ then there is an elementary embedding $j:V_{\kappa_{\beta_{n-2}}} \prec V_{\kappa_{\beta_{n-1}}}$ with critical point $\kappa' < \kappa_0$ and $j(\kappa')=\kappa_{0}$ and $j(\kappa_{\beta_{i}})=\kappa_{\beta_{i+1}}$ for all $i$ such that $0 \leq i < n-2$. \end{Definition}

\begin{Definition} A cardinal $\kappa$ such that $\kappa$ is $\kappa$-tremendous is said to be hyper-tremendous. \end{Definition}

\begin{Definition} Suppose that $\alpha$ is a limit ordinal such that $\alpha>0$, and that $\langle \kappa_\beta : \beta<\alpha \rangle$ together with a family $\mathcal{F}$ of elementary embeddings witness that $\kappa$ is $\alpha$-tremendous, with just one embedding in the family $\mathcal{F}$ witnessing $\alpha$-tremendousness for each finite sequence of ordinals less than $\alpha$. Suppose that, given any $\omega$-sequence of ordinals $\langle \beta_i :i<\omega \rangle$ less than $\alpha$, there is an elementary embedding $j:V_{\lambda+1} \prec V_{\lambda+1}$ with critical sequence $\langle \kappa_{\beta_i}:i<\omega \rangle$, obtained by gluing together the obvious $\omega$-sequence of embeddings from $\mathcal{F}$, (where such gluing is indeed assumed to be possible), where $\lambda:=\mathrm{sup}_{n \in\omega} \hspace{0.1 cm} \kappa_{\beta_n}$. Suppose further that, for each such $\omega$-sequence of ordinals, there is an elementary embedding $k:V \prec M$, with $V_{\lambda} \subseteq M$ and $(V_{\lambda+1})^{M} \prec V_{\lambda+1}$, and $k \mid V_{\lambda}=j \mid V_{\lambda}$. If all these conditions are satisfied, then the cardinal $\kappa$ is said to be $\alpha$-enormous. \end{Definition}

\begin{Definition} A cardinal $\kappa$ such that $\kappa$ is $\kappa$-enormous is said to be hyper-enormous. \end{Definition}

We will shortly establish that the $\alpha$-tremendous cardinals and hyper-tremendous cardinals are consistent relative to I2. It is evidence that the $\alpha$-enormous cardinals dominate I1 in consistency strength.

\bigskip

Let us begin by establishing that the $\alpha$-tremendous cardinals for limit ordinals $\alpha>0$ and the hyper-tremendous cardinals have consistency strength strictly between I3 and I2.

\section{The consistency strength of the $\alpha$-tremendous cardinals and hyper-tremendous cardinals}

\begin{Definition} A cardinal $\kappa$ is said to be an I3 cardinal if it is the critical point of an elementary embedding $j:V_{\delta} \prec V_{\delta}$. I3 is the assertion that an I3 cardinal exists, and I3($\kappa,\delta$) is the assertion that the first statement holds for a particular pair of ordinals $\kappa, \delta$ such that $\kappa<\delta$.\end{Definition}

\begin{Definition} A cardinal $\kappa$ is said to be an I2 cardinal if it is the critical point of an elementary embedding $j:V \prec M$ such that $V_{\delta} \subset M$ where $\delta$ is the least ordinal greater than $\kappa$ such that $j(\delta)=\delta$. I2 is the assertion that an I2 cardinal exists, and I2($\kappa,\delta$) is the assertion that the first statement holds for a particular pair of ordinals $\kappa, \delta$ such that $\kappa<\delta$. \end{Definition}

In this section we wish to show that the $\alpha$-tremendous cardinals and hyper-tremendous cardinals have consistency strength strictly between I3 and I2.

\bigskip

\begin{theorem} Suppose that $\kappa$ is $\omega$-tremendous as witnessed by $\langle \kappa_{i}:i < \omega \rangle$. Then there is a filter $F$ which is an intersection of a family of normal ultrafilters $U_{i}$ over $\kappa_0$, with a countably infinite indexing set, which as such contains every stationary subset of $\kappa_0$, such that the set of all $\kappa'<\kappa_0$ such that I3($\kappa'$, $\delta$) for some $\delta<\kappa_{0}$, is a member of $F$. \end{theorem}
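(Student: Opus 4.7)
The plan is to read off a countable family $\{U_n\}_{n \geq 2}$ of normal ultrafilters on $\kappa_0$ directly from the embeddings supplied by $\omega$-tremendousness, take their intersection as $F$, and show that the set $A = \{\kappa' < \kappa_0 : \exists \delta < \kappa_0,\, \mathrm{I3}(\kappa',\delta)\}$ lies in every member of the family.

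For each $n \geq 2$ I will invoke $\omega$-tremendousness with the increasing sequence $\langle 1, 2, \ldots, n\rangle$ and $\beta' = 0$ to obtain an elementary embedding $j_n : V_{\kappa_{n-1}} \prec V_{\kappa_n}$ with critical point $\kappa_0$ and uniform shifting property $j_n(\kappa_i) = \kappa_{i+1}$ for $0 \leq i \leq n-2$. Because $\mathcal{P}(\kappa_0) \subseteq V_{\kappa_0+1} \subseteq V_{\kappa_{n-1}}$, the derived measure $U_n := \{X \subseteq \kappa_0 : \kappa_0 \in j_n(X)\}$ is a genuine normal $\kappa_0$-complete ultrafilter on $\kappa_0$. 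Setting $F := \bigcap_{n \geq 2} U_n$, the intersection of countably many normal $\kappa_0$-complete filters is itself normal and $\kappa_0$-complete, hence extends the club filter on $\kappa_0$; this I take to underwrite the ``contains every stationary subset'' clause in the sense that $F$'s dual ideal lies inside the nonstationary ideal, so every stationary set is $F$-positive.

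By the definition of $U_n$ the claim $A \in F$ amounts to $\kappa_0 \in j_n(A)$ for every $n \geq 2$. Unwinding this via elementarity of $j_n$, absoluteness of I3-witnessing embeddings (whose rank is bounded in $\kappa_{n-1}$) between $V_{\kappa_{n-1}}$, $V_{\kappa_n}$ and $V_\kappa$, and the uniform shifting $j_n(\kappa_0) = \kappa_1$, one finds that the condition reduces --- independently of $n$ --- to a single $V$-statement: there exist $\delta \in (\kappa_0, \kappa_1)$ and an elementary embedding $V_\delta \prec V_\delta$ with critical point $\kappa_0$. Everything up to this point is formal bookkeeping with the derived-ultrafilter construction.

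The main obstacle is thus the construction of this low-rank I3 witness for $\kappa_0$. I intend to exploit the full sequence $(j_m)_{m \geq 2}$ rather than any single embedding: the shared shifting property forces their derived short extenders on $\kappa_0$ to be mutually coherent, and I plan to splice them, via a derived-extender and Skolem-hull construction carried out inside $V_{\kappa_1}$, into a genuine internal self-embedding $V_\delta \prec V_\delta$ with critical point $\kappa_0$. Tarski--Vaught reflection along the club $\{\alpha < \kappa_1 : V_\alpha \prec V_{\kappa_1}\}$ will then allow $\delta$ to be taken below $\kappa_1$. Once this witness is in hand the reduction above immediately yields $A \in U_n$ for every $n$, and hence $A \in F$, completing the proof.
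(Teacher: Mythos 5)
Your setup and your reduction are sound and track the first half of the paper's argument: you extract derived normal ultrafilters $U_n$ from embeddings $j_n\colon V_{\kappa_{n-1}}\prec V_{\kappa_n}$ supplied by $\omega$-tremendousness, intersect them to form $F$, and correctly observe that $A\in F$ unwinds, via $A\in U_n\iff\kappa_0\in j_n(A)$ together with absoluteness of I3-witnesses of bounded rank, to the single statement that there is a $\delta$ with $\kappa_0<\delta<\kappa_1$ and an elementary $k\colon V_\delta\prec V_\delta$ with critical point $\kappa_0$. But that statement carries the entire content of the theorem, and the mechanism you propose for it does not work. First, the ``shared shifting property'' does not force the $j_m$, or their derived extenders beyond the normal measure, to cohere: agreement on the finitely many points $\kappa_0,\dots,\kappa_{m-2}$ says nothing about agreement on arbitrary subsets of $\kappa_0$ or on longer extenders, and the paper itself treats coherence as something that must be \emph{arranged} by an inductive choice of embeddings (and, in the definition of $\alpha$-enormous, as an additional hypothesis). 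Second, and more fundamentally, every embedding at your disposal either fixes $\kappa_0$ or sends it to $\kappa_1$ or beyond, whereas the embedding you need must send $\kappa_0$ to an ordinal strictly below $\delta<\kappa_1$; no splicing of the derived extenders of the $j_m$ can manufacture such a target, and a Skolem-hull/transitive-collapse argument inside $V_{\kappa_1}$ will either move $\kappa_0$ (changing the critical point) or fail to produce a structure of the form $V_{\bar\delta}$. The Tarski--Vaught reflection you invoke at the end cannot rescue this, because the statement to be reflected is witnessed only by objects of rank at least $\kappa_1$, and getting it to hold with a witness inside $V_{\kappa_1}$ is exactly the thing to be proved.

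The paper goes the other way around: rather than showing that $\kappa_0$ itself is a low I3 critical point, it reflects \emph{downward}, producing inside any prescribed measure-one set an increasing sequence $\kappa'_0<\kappa'_1<\cdots<\kappa_0$ together with embeddings $V_{\kappa'_{n-1}}\prec V_{\kappa'_n}$ with critical point $\kappa'_0$, chosen inductively so as to cohere; their union is an elementary $V_\delta\prec V_\delta$ with $\delta=\sup_n\kappa'_n<\kappa_0$, so $\kappa'_0\in A$. Since such a $\kappa'_0$ can be found inside every set of each ultrafilter, $A\in U_n$ for all $n$ and hence $A\in F$. To salvage your route you would need to supply this downward reflection (or an equivalent inverse-limit argument) in place of the step you have labelled ``the main obstacle''; as written, that step is a genuine gap.
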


\begin{proof} Suppose that $\kappa$ is $\omega$-tremendous and that $\langle \kappa_i: i \in \omega \rangle$ together with a certain family $\mathcal{F}$ of elementary embeddings witness the $\omega$-tremendousness of $\kappa$. The family $\mathcal{F}$ can be indexed in an obvious way by the set of all finite subsets of $\langle \kappa_i : i \in \omega \rangle$, and to each elementary embedding in $\mathcal{F}$ corresponds a normal ultrafilter. If we let $F$ denote the filter over $\kappa_0$, which is the intersection of every such normal ultrafilter, then the filter $F$ satisfies the requirements given in the statement of the theorem. We may use reflection to show the existence of a $\kappa'_{0} < \kappa_{0}$ belonging to any fixed member of $F$, such that $\langle \kappa'_{0}, \kappa_{0}, \kappa_{1} \ldots \rangle$, together with a certain family $\mathcal{F}_{0}$ of elementary embeddings, witness $\omega$-tremendousness of $\kappa$. Then we can repeat this procedure to find a $\kappa'_{1}$ belonging to the same fixed member of $U$ such that $\kappa'_{0}<\kappa'_{1}<\kappa_{0}$, such that $\langle \kappa'_{0}, \kappa'_{1}, \kappa_{0}, \kappa_{1}, \ldots \rangle$, together with a certain family $\mathcal{F}_{1}$ of elementary embeddings, witness $\omega$-tremendousness of $\kappa$. We can continue in this way. It is also possible simultaneously to arrange things so that there is a sequence of embeddings $j_{n}:V_{\kappa'_{n-1}} \prec V_{\kappa'_{n}}$ with critical point $\kappa'_{0}$ for all $n>1$, which can be chosen by induction, such that for each $n>1$, $j_{n}$ coheres with $j_{m}$ for all $m$ such that $1<m<n$, and the embeddings from $\mathcal{F}_{n}$ that have critical sequence beginning with $\langle \kappa'_{0}, \kappa'_{1}, \ldots \kappa'_{n-2} \rangle$ can be chosen so as to be coherent with $j_{n}$. In this way we obtain a sequence $\langle \kappa'_{n} :n < \omega \rangle$ and a sequence of embeddings $j_{n}$ with the previously stated properties. The existence of such a pair of sequences for any given element of $F$ yields the claimed result. \end{proof}

\begin{theorem} Suppose that $\kappa$ is an I2 cardinal. Then there is a normal ultrafilter $U$ on $\kappa$ concentrating on the hyper-tremendous cardinals. \end{theorem}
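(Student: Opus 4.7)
The plan is as follows. Let $j: V \prec M$ witness I2$(\kappa, \delta)$ with $V_\delta \subseteq M$ and $\delta = \sup_{n<\omega} j^n(\kappa)$, and let $U = \{A \subseteq \kappa : \kappa \in j(A)\}$ be the normal ultrafilter on $\kappa$ derived from $j$. The strategy is to show $V \models$ ``$\kappa$ is hyper-tremendous''; combined with the absoluteness of this statement between $V$ and $M$, this will yield $\{\lambda < \kappa : \lambda \text{ is hyper-tremendous}\} \in U$ by the usual derivation.

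For the absoluteness step, ``$\lambda$ is hyper-tremendous'' is expressible over $V_{\lambda+3}$: the witnessing $\lambda$-sequence lies in $V_{\lambda+1}$, each elementary embedding between two $V_{\kappa_{\beta}}$'s lies in $V_{\lambda+2}$, and a choice function supplying a witness for each relevant finite increasing tuple lies in $V_{\lambda+3}$. Since $V_{\kappa+3} \subseteq V_\delta \subseteq M$ and transitivity of $M$ gives $V_{\kappa+3}^M = V_{\kappa+3}$, the property holds in $V$ iff it holds in $M$; and if it holds in $M$ then $\{\lambda < \kappa : \lambda \text{ is hyper-tremendous}\} \in U$ as required.

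To exhibit the required witnesses in $V$: since $j \mid V_\kappa$ is the identity (the standard consequence of $\mathrm{crit}(j)=\kappa$), the inclusion $V_\kappa \hookrightarrow V_{j(\kappa)}^M$ coincides with $j \mid V_\kappa$ and is elementary, and $V_\delta \subseteq M$ yields $V_{j(\kappa)}^M = V_{j(\kappa)}$, so $V_\kappa \prec V_{j(\kappa)}$ in $V$. Hence $\{\lambda < \kappa : V_\lambda \prec V_\kappa\} \in U$, and its increasing enumeration gives the sequence $\langle \kappa_\beta : \beta < \kappa \rangle$. For each finite increasing tuple $\beta_0 < \cdots < \beta_{n-1}$ in $\kappa$ and each $\beta' < \beta_0$, I would construct the required embedding $V_{\kappa_{\beta_{n-2}}} \prec V_{\kappa_{\beta_{n-1}}}$ (with critical point $\kappa_{\beta'}$, sending $\kappa_{\beta'} \mapsto \kappa_{\beta_0}$ and $\kappa_{\beta_i} \mapsto \kappa_{\beta_{i+1}}$) as a reflection of an iterate of $j$: iterating $j \mid V_\delta$ produces, for each $m < m'$, an elementary embedding $V_{j^m(\kappa)} \prec V_{j^{m'}(\kappa)}$ with critical point $\kappa$ sending $j^i(\kappa) \mapsto j^{i+m'-m}(\kappa)$, and a suitable reflection of this structure down below $\kappa$ via $U$ yields the required embedding.

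The main obstacle will be arranging the coherence of this family of embeddings: the embedding in slot $(\beta_0, \ldots, \beta_{n-1}, \beta')$ must have the correct action on all the $\kappa_{\beta_i}$'s simultaneously, which requires the sequence $\kappa_\beta$ to be closed under the image operations of the reflected iterates. I would refine the $U$-large set $\{\lambda < \kappa : V_\lambda \prec V_\kappa\}$ to a further $U$-large set of ``image-closed'' $\lambda$ using the $\kappa$-completeness of $U$ and closure under diagonal intersections, and enumerate that refinement instead. The additional closure $V_\delta \subseteq M$ of the I2 embedding (beyond the I3 structure internal to $V_\delta$) is what makes this simultaneous refinement feasible.
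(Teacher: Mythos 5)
Your proposal follows essentially the same route as the paper's proof: derive the normal ultrafilter $U$ from the I2 embedding $j$, reflect the embedding structure of $j$ below $\kappa$ to produce a $U$-large witnessing sequence together with the required family of embeddings (so that $\kappa$ itself is hyper-tremendous), and then pass to $M$ to conclude that $U$ concentrates on hyper-tremendous cardinals. The only difference is bookkeeping: the paper takes as its $U$-large set the $\kappa'<\kappa$ admitting embeddings $k_{\kappa'}:V_{\delta}\prec V_{\delta}$ whose critical sequence is that of $j$ prefixed by $\kappa'$, which packages the coherence you instead arrange by refining $\{\lambda<\kappa : V_{\lambda}\prec V_{\kappa}\}$ via diagonal intersections, and both arguments leave the final derivation of the full family of embeddings at the same level of sketch.
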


\begin{proof} Suppose that $\kappa$ is an I2 cardinal and let the elementary embedding $j:V \prec M$ with critical point $\kappa$ witness that $\kappa$ is an I2 cardinal, the supremum of the critical sequence being $\delta$. If we let $U$ be the ultrafilter on $\kappa$ arising from $j$ we can easily show that the set of $\kappa'<\kappa$ such that there is an elementary embedding $k_{\kappa'}:V_{\delta} \prec V_{\delta}$, with critical sequence consisting of $\kappa'$ followed by the critical sequence of $j$, is a member of $U$ (denoted by $X$ hereafter). Then the sequence of ordinals belonging to this set, together with a family of embeddings that can be derived from the sequence of embeddings $\langle k_{\kappa'}:\kappa' \in X \rangle$ in an obvious way, witness that $\kappa$ is hyper-tremendous. Since it also follows that $\kappa$ is hyper-tremendous in $M$, the desired result follows. \end{proof}

This completes the proof that the $\alpha$-tremendous cardinals and hyper-tremendous cardinals have consistency strength strictly between I3 and I2. In the next section we discuss the consistency strength of $\alpha$-enormous and hyper-enormous cardinals.

\section{Virtually $\alpha$-tremendous and hyper-tremendous cardinals}

Ralf Schindler and Victoria Gitman in \cite{Gitman2017} have introduced the notion of virtual large-cardinal properties. Given any large-cardinal property defined with reference to a set-sized elementary embedding $j:V_{\alpha} \prec V_{\beta}$ or family of such embeddings, the corresponding virtual large-cardinal property is defined in the same way except by means of elementary embeddings $j:(V_{\alpha})^{V} \prec (V_{\beta})^{V}$ where $j\in V[G]$ for a set generic extension of $V$. The notion of a virtually $\alpha$-tremendous, hyper-tremendous, $\alpha$-enormous or hyper-enormous cardinal is clear. We state a result about virtually hyper-tremendous cardinals in this section and shall state a result about virtually $\omega$-enormous cardinals later in Section 6.

\begin{theorem} \label{virtual} If $\kappa$ is a measurable cardinal, and $V=HOD$, then there is a sequence cofinal in $\kappa$ witnessing the virtual hyper-tremendousness of $\kappa$. \end{theorem}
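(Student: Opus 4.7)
The plan is to exploit a normal ultrafilter $U$ on $\kappa$ together with the assumption $V = HOD$ to produce, via set-generic extensions, all of the witnesses required by the definition of virtual hyper-enormousness. Let $j_U : V \to M$ be the induced ultrapower embedding. Since $\kappa$ is measurable, and hence inaccessible, the set $\{\alpha < \kappa : V_\alpha \prec V_\kappa\}$ contains a club, so we can fix a continuous cofinal enumeration $\langle \kappa_\beta : \beta < \kappa \rangle$ of a subset of this club; this discharges the opening clause in the $\kappa$-tremendousness requirement.

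For the generic embeddings witnessing virtual $\kappa$-tremendousness, fix an increasing finite tuple $\beta_0 < \cdots < \beta_{n-1} < \kappa$ together with the prescribed critical point $\kappa_{\beta'}$ (or $\kappa' < \kappa_0$ in the case $\beta_0 = 0$). Iterating $U$ produces embeddings $j^{(m)}_U : V \to M_m$ whose critical sequences extend cofinally above $\kappa$, and reflecting the statement ``there exists a configuration $\kappa' < \alpha_0 < \cdots < \alpha_{n-1}$ together with an elementary embedding $V_{\alpha_{n-2}} \prec V_{\alpha_{n-1}}$ shifting the critical data correctly'' down through $U$ gives that $U$-almost every $\alpha < \kappa$ supports such a small configuration. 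The embedding itself is then realised in $V^{\mathrm{Col}(\omega,V_{\kappa_{\beta_{n-1}}})}$: in that extension the two structures $V_{\kappa_{\beta_{n-2}}}$ and $V_{\kappa_{\beta_{n-1}}}$ become countable, and under $V=HOD$ they are definably coded by reals whose complete theories, in the language expanded by constants for the designated critical-point data, agree by the reflection just described. A standard back-and-forth argument, in the style of Gitman--Schindler, then yields the required generic elementary embedding with the correct critical point.

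For the additional clauses in the definition of $\kappa$-enormousness, fix an $\omega$-sequence $\langle \beta_n : n<\omega\rangle$ of indices below $\kappa$; because $\mathrm{cof}(\kappa)>\omega$ its supremum $\beta$ is strictly below $\kappa$ and $\lambda:=\sup_n \kappa_{\beta_n}\leq \kappa_\beta<\kappa$. The coherent gluing of the finite embeddings to a generic $j : V_{\lambda+1}\prec V_{\lambda+1}$ with critical sequence $\langle \kappa_{\beta_n}\rangle$ is engineered as in the inductive construction in the proof of Theorem 2.3, by arranging that later-stage embeddings extend earlier ones on their common domains; closure of the underlying back-and-forth systems in the generic extension secures the gluing. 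The embedding $k : V \prec M$ with $V_\lambda \subseteq M$ and $k\restriction V_\lambda = j\restriction V_\lambda$ is extracted from an appropriate iterate of $U$, chosen so that the critical point lands at $\kappa_{\beta_0}$ and $M$ is closed under $\lambda$-sequences; standard arguments then give $V_\lambda \subseteq M$ and $(V_{\lambda+1})^M \prec V_{\lambda+1}$. Finally, $V=HOD$ collapses every iterated $HOD$ of $V_\kappa$ down to $V_\kappa$ itself, so the last clause $k(S)\prec S$, for $V_{\lambda+1}\subseteq S \subseteq V_\rho$ lying in the $\kappa$-th iterated $HOD$, reduces to an assertion about arbitrary such $S$ in $V$ and follows from elementarity of $k$ applied to the definition of $S$.

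The hard part will be matching the exact critical-point prescriptions from the $\alpha$-tremendousness clauses simultaneously with the coherence demanded for the $\omega$-sequence gluings and with the agreement $k\restriction V_\lambda = j\restriction V_\lambda$ for the $V \to M$ extension. All of this turns on reading off the necessary theory-equalities from a single measure on $\kappa$, and this in turn is exactly what the $V=HOD$ hypothesis supplies: it forces enough definability that back-and-forth in collapsing extensions pins down elementary embeddings with the prescribed critical data. The remaining steps then assemble standard iterated-ultrapower and generic-absoluteness techniques.
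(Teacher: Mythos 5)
The central step of your argument does not go through. To produce the generic elementary embeddings $V_{\kappa_{\beta_{n-2}}} \prec V_{\kappa_{\beta_{n-1}}}$ you collapse both structures to be countable and then run a back-and-forth using only the agreement of their complete theories (in a language with constants for the critical-point data). Elementary equivalence of two countable structures does not yield an elementary embedding between them, let alone one with a prescribed critical point and prescribed images of designated ordinals; the Gitman--Schindler absoluteness lemma you are implicitly invoking requires the embedding to already exist in \emph{some} outer model, and only then transfers it down to a collapse extension. You never exhibit such an outer-model embedding. The same problem undermines your reflection step: the statement ``there is an elementary embedding $V_{\alpha_{n-2}} \prec V_{\alpha_{n-1}}$ shifting the critical data correctly'' is not true in $V$ or in the ultrapower $M$ for the relevant $\alpha$'s below a mere measurable (any such actual embedding would carry consistency strength in the I3 range, far beyond a measurable), so there is nothing for $U$ to reflect until one first shows that such embeddings exist \emph{generically} over $M$ --- and that is precisely the content of the theorem, not something one may assume.

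The paper supplies exactly this missing ingredient, and it is the only place the hypothesis $V=HOD$ is really used: the restriction $j' = j \mid V_{\kappa+1}$ of the measurable embedding is a set, and by Vop\v{e}nka's theorem (Theorem 15.46 of Jech) together with $V=HOD$ it is set-generic over $M$. Hence over $M$ there genuinely is a \emph{generic} elementary embedding $V_{\kappa+1} \prec (V_{j(\kappa)+1})^{M}$ with critical point $\kappa$, and normality of $U$ then puts the set $S$ of $\alpha<\kappa$ admitting such generic embeddings into $U$; the image $j'(S)$ witnesses the virtual property for $j'(\kappa)$ in $M$, and elementarity finishes the argument, with the enormousness-specific clauses handled via the Reitz--Williams class forcing $\mathbb{M}(\kappa)$ rather than via your claim that $V=HOD$ collapses the iterated $HOD$s of $V_\kappa$ to $V_\kappa$ (which does not follow). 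Your outline of the gluing, the $k:V \prec M$ extension, and the final clause all inherit the same gap, since each presupposes that the finite-stage generic embeddings have already been produced.
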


\begin{proof} Suppose that $j: V \prec M$ with critical point $\kappa$ witnesses the measurability of $\kappa$. In particular, it follows that there is an elementary embedding $j':V_{\kappa+1} \prec (M \cap V_{j(\kappa)+1})$. Now, we are assuming that $V=HOD$. By Theorem 15.46 on p. 249 of \cite{Jech2000} every $x \in V$ is generic over $HOD$. Therefore the elementary embedding $j':V_{\kappa+1} \prec (M \cap V_{j(\kappa)+1})$ is generic over $M$, (here using the hypothesis $V=HOD$). Let $U$ be a normal ultrafilter on $\kappa$, which arises from the embedding $j'$. We must find a sequence $S$ of ordinals cofinal in $\kappa$, which is a member of $U$, such that $j'(S)$ witnesses virtual hyper-tremendousness of $j'(\kappa)$ relative to $M$.

\bigskip

\noindent Consider the set $S:=\{\alpha<\kappa: \mathrm{there \hspace{0.1 cm} is \hspace{0.1 cm} an \hspace{0.1 cm} elementary \hspace{0.1 cm} embedding \hspace{0.1 cm}} k:V_{\alpha} \prec V_{\beta} \mathrm{\hspace{0.1 cm} for \hspace{0.1 cm} some \hspace{0.1 cm} \beta>\alpha \hspace{0.1 cm} with \hspace{0.1 cm}} k \in V[G] \mathrm{\hspace{0.1 cm} where \hspace{0.1 cm}} G \mathrm{\hspace{0.1 cm} is \hspace{0.1 cm} generic \hspace{0.1 cm} over \hspace{0.1 cm}} V \}$. We can see that $S \in U$, and that $j'(S)$ witnesses virtual hyper-tremendousness of $j'(\kappa)$ relative to $M$ and therefore that $\kappa$ is virtually hyper-tremendous.

\end{proof}

\section{Inconsistency of the choiceless cardinals}

In what follows we shall build on work of Gabriel Goldberg and Farmer Schultzenberg in \cite{Goldberg2020} and \cite{Schlutzenberg2020} to show that $\mathsf{ZF}$+``There exists a $\Sigma_1$-elementary embedding $j:V_{\lambda+3} \prec V_{\lambda+3}$" is inconsistent. The key result we shall be using is Theorem 6.19 of \cite{Goldberg2020} and its corollary, which we state below. More specifically we shall be making use of the forcing construction which is used in the proof of Theorem 6.19 of \cite{Goldberg2020}.

\bigskip

We should note the important work of Joan Bagaria, Peter Koellner, and Hugh Woodin on choiceless cardinals in \cite{Bagaria2019}, and the work of Rafaella Cutolo in \cite{Cutolo2018} on the structure theory associated with Berkeley cardinals.

\begin{theorem} [Theorem 6.19 of \cite{Goldberg2020}.] Suppose $\lambda$ is an ordinal and there is a $\Sigma_1$-elementary embedding $j:V_{\lambda+3} \prec V_{\lambda+3}$ with $\lambda$ equal to the supremum of the critical sequence of $j$. Assume $\mathrm{DC}_{V_{\lambda+1}}$. Then there is a set generic extension $N$ of $V$ such that $(V_{\delta})^{N}$ satisfies $\mathsf{ZFC+I_0}$ for each $\delta<\lambda$ which is $<\lambda$-supercompact. \end{theorem}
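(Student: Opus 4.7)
The plan is to use the $\Sigma_1$-elementary embedding together with $\mathrm{DC}_{V_{\lambda+1}}$ to extract a choiceless analogue of an $\mathsf{I_0}$ embedding at level $\lambda$, to reflect it down to each $<\lambda$-supercompact $\delta$, and finally to pass to a set generic extension in which full choice is restored below $\lambda$ while the reflected embeddings are preserved.

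First, I would show that under $\mathrm{DC}_{V_{\lambda+1}}$ the restriction $j \restriction V_{\lambda+2}$ is fully elementary, so that in particular one obtains an elementary embedding $\bar{\jmath}: L(V_{\lambda+1}) \prec L(V_{\lambda+1})$ with critical point $\kappa_0 < \lambda$. This is the choiceless $\mathsf{I_0}$ at level $\lambda$, and the role of $\mathrm{DC}_{V_{\lambda+1}}$ here is to make the closure and absoluteness computations in the Kunen-style tower argument (as refined by Goldberg and Schlutzenberg in the cited references) go through in the absence of global choice.

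Next I would exploit the $<\lambda$-supercompactness of $\delta$. For each $\mu$ with $\delta \le \mu < \lambda$ fix a $\mu$-supercompactness embedding $\pi: V \to M$ with critical point $\delta$ and $M^{\mu} \subseteq M$. Applying elementarity of $\pi$ to the statement asserting the existence of $\bar{\jmath}$, and factoring through $M$, one extracts for each such $\delta$ an elementary embedding $k_\delta: L(V_{\gamma_\delta}) \prec L(V_{\gamma_\delta})$ for some $\gamma_\delta < \delta$, with critical point below $\gamma_\delta$. Iterating the reflection cofinally in $\lambda$ one can arrange that the $k_\delta$ really carry the full $\mathsf{I_0}$ structure.

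Finally I would force with a set-sized poset $\mathbb{P}$ that generically well-orders $V_\lambda$, following Bukovsky: such a $\mathbb{P}$ exists under $\mathrm{DC}_{V_{\lambda+1}}$, and in the resulting extension $N$ the level $(V_\delta)^N$ satisfies the full Axiom of Choice for every $\delta < \lambda$. The main obstacle is the lifting step, namely verifying that each $k_\delta$ extends to an elementary embedding in $N$ witnessing $\mathsf{I_0}$ in $(V_\delta)^N$. The standard master-condition recipe requires, for each $p$ in the generic below the critical level, a condition refining $k_\delta(p)$; the construction of a compatible sequence of such conditions along $\omega$-many iterates of the generic is exactly what $\mathrm{DC}_{V_{\lambda+1}}$ is designed to provide. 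Once this lifting is secured, that the lifted embeddings witness $\mathsf{I_0}$ in $(V_\delta)^N$ is a routine verification.
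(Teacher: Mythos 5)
This statement is not proved in the paper at all: it is imported verbatim as Theorem 6.19 of the cited Goldberg preprint (``Even ordinals and the Kunen inconsistency''), and the paper only uses its statement and the forcing construction from its proof. So there is no internal argument to compare yours against; the comparison has to be with Goldberg's own proof, and measured against that your sketch has several genuine gaps rather than being a variant route.

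The most serious gap is the reflection step. You ``fix a $\mu$-supercompactness embedding $\pi:V\to M$ with $M^{\mu}\subseteq M$'' and then apply elementarity of $\pi$. But the ambient theory is $\mathsf{ZF}+\mathrm{DC}_{V_{\lambda+1}}$: without choice, \L o\'s's theorem fails for ultrapowers, the ultrapower need not be well-founded, and the closure condition $M^{\mu}\subseteq M$ is itself a choice-laden statement. In this setting $<\lambda$-supercompactness has to be formulated via normal fine ultrafilters on $\mathcal{P}_{\delta}(V_{\mu})$ (or $\Sigma_1$-fragments of \L o\'s), and recovering enough elementarity to ``push down'' the $\mathrm{I_0}$ configuration is exactly where the real work in Goldberg's argument lies; as written, your step (2) would fail. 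A second gap is the forcing step: generically adding a well-ordering of $V_{\lambda}$ does not by itself make $(V_{\delta})^{N}$ a model of $\mathsf{AC}$, since $(V_{\delta})^{N}$ contains new sets not coded by the ground-model well-ordering; the hypothesis that $\delta$ is $<\lambda$-supercompact is precisely what supplies the coding (roughly, $V_{\lambda}\subseteq \mathrm{HOD}(V_{\delta+1}\cup\{x\})$-style facts) needed for well-orderability of the levels of the extension, so it cannot be treated as an afterthought to a Bukovsky-style poset. Finally, two smaller but real issues: extending $j\restriction V_{\lambda+2}$ to an elementary $\bar{\jmath}:L(V_{\lambda+1})\prec L(V_{\lambda+1})$ requires defining the embedding on ordinals far above $\lambda^{+}$ and proving elementarity of the extension, which is not a consequence of $\mathrm{DC}_{V_{\lambda+1}}$ alone; and the master-condition lifting of a rank-into-rank embedding across a poset whose conditions have rank up to $\lambda$ is not a routine application of $\mathrm{DC}$, since the poset is not small relative to the critical point and preservation of $\mathrm{I_0}$-type embeddings under such forcing is delicate.
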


\begin{Corollary} [Corollary to Theorem 6.19 of \cite{Goldberg2020}.] Over $\mathsf{ZF+DC}$, the existence of a $\Sigma_1$-elementary embedding from $V_{\lambda+3}$ to $V_{\lambda+3}$ implies the consistency of $\mathsf{ZFC+I_0}$. \end{Corollary}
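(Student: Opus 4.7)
The plan is to derive the corollary from Theorem 6.19 by first arranging its hypotheses from the weaker assumption $\mathsf{ZF}+\mathsf{DC}$ together with a $\Sigma_1$-elementary embedding $j:V_{\lambda+3}\prec V_{\lambda+3}$, and then extracting an arithmetic witness of $\mathrm{Con}(\mathsf{ZFC}+\mathsf{I_0})$ from the conclusion.

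First, I would reduce to the case in which $\lambda$ equals the supremum of the critical sequence of $j$. Setting $\kappa:=\mathrm{crit}(j)$ and $\lambda_0:=\sup_{n<\omega} j^n(\kappa)\leq \lambda$, a standard calculation shows that $j(\lambda_0)=\lambda_0$ and that $j$ sends $V_{\lambda_0+k}$ into itself for $k\leq 3$ (using that $\Sigma_1$-elementarity preserves rank). The restriction $j':=j\mid V_{\lambda_0+3}$ is therefore a map $V_{\lambda_0+3}\to V_{\lambda_0+3}$. Using the $\Sigma_1$-elementarity of $j$ on $V_{\lambda+3}$ together with reflection inside $V_{\lambda_0+3}$, one verifies that $j'$ is itself $\Sigma_1$-elementary. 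Thus we may replace $\lambda$ by $\lambda_0$ and assume henceforth that $\lambda=\sup_n j^n(\kappa)$.

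Next, I would arrange $\mathrm{DC}_{V_{\lambda+1}}$ while preserving $j$. Since $\mathrm{Con}(\mathsf{ZFC}+\mathsf{I_0})$ is an arithmetic statement and therefore absolute between $V$ and any set-generic extension, one is free to force over $V$ with a suitable mild notion producing $\mathrm{DC}_{V_{\lambda+1}}$, verifying that the $\Sigma_1$-elementary embedding lifts. With $\mathrm{DC}_{V_{\lambda+1}}$ in hand together with $j$, Theorem 6.19 applies and produces a further set-generic extension $N$ in which $(V_\delta)^N\models \mathsf{ZFC}+\mathsf{I_0}$ for every $\delta<\lambda$ which is $<\lambda$-supercompact. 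The critical point $\kappa$ of $j$ is itself $<\lambda$-supercompact: for each $\alpha<\lambda$ choose $n$ with $j^n(\kappa)>\alpha$ and extract the corresponding supercompactness measure from $j$. Consequently $(V_\kappa)^N$ is a transitive set model of $\mathsf{ZFC}+\mathsf{I_0}$, which witnesses $\mathrm{Con}(\mathsf{ZFC}+\mathsf{I_0})$ in $N$ and hence, by arithmetic absoluteness, in the original $\mathsf{ZF}+\mathsf{DC}$ universe.

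The principal obstacle is the middle step: establishing $\mathrm{DC}_{V_{\lambda+1}}$ without destroying the $\Sigma_1$-elementarity of $j$ at the $V_{\lambda+3}$ level, since the $V_{\lambda+3}$-level structure is sensitive to forcing that touches $V_{\lambda+1}$. The most likely successful strategy is to argue that $\mathrm{DC}_{V_{\lambda+1}}$ is in fact already derivable from $\mathsf{DC}$ together with the embedding hypothesis itself — the $\omega$-sequence of critical points and the closure of $V_{\lambda+1}$ under $j$ can be used to supply the required dependent choices — appealing to the relevant techniques of \cite{Goldberg2020} and \cite{Schlutzenberg2020}. Once that reduction is verified, no auxiliary forcing is needed and the corollary follows immediately from Theorem 6.19 by the extraction argument above.
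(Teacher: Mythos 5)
You should first note that the paper offers no proof of this statement at all: it is quoted verbatim as the ``Corollary to Theorem 6.19 of \cite{Goldberg2020}'' and imported as an external result, so there is no argument in the paper to compare yours against. Judged on its own terms, your skeleton is the natural one and most of it is sound: the reduction to $\lambda=\sup_n j^n(\kappa)$ works (note that $j$ fixes $\lambda$, $\lambda+1$, $\lambda+2$ since an order-preserving injection of $\lambda+3$ into itself satisfies $j(\alpha)\geq\alpha$, so $\mathrm{crit}(j)<\lambda$ and the critical sequence is bounded by $\lambda$; and since satisfaction in the set structure $V_{\lambda_0+3}$ is $\Delta_1$, the restricted map is even fully elementary); the critical point is $<\lambda$-supercompact via the measures derived from the iterates; and $\mathrm{Con}(\mathsf{ZFC}+\mathrm{I_0})$ is arithmetic, hence absolute to the generic extension $N$ and back.

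The genuine gap is exactly the one you flag and then wave at: passing from plain $\mathsf{DC}$ to $\mathrm{DC}_{V_{\lambda+1}}$. This is not a routine verification --- it is the entire content of the corollary beyond the theorem, since Theorem 6.19 already does everything else. Your first suggestion (force to arrange $\mathrm{DC}_{V_{\lambda+1}}$ and lift $j$) is the harder route for precisely the reason you identify: any forcing that adjoins new $\omega$-sequences of elements of $V_{\lambda+1}$ changes $V_{\lambda+2}$ and $V_{\lambda+3}$ of the extension, so there is no off-the-shelf lifting argument for a $\Sigma_1$-elementary map at the $\lambda+3$ level, and you give none. Your second suggestion (derive $\mathrm{DC}_{V_{\lambda+1}}$ outright from $\mathsf{DC}$ plus the embedding) is the plausible one --- results of this flavour, that rank-into-rank embeddings at even levels imply local choice principles such as $\lambda$-DC or $\mathrm{DC}_{V_\lambda}$, do appear in \cite{Goldberg2020} and \cite{Schlutzenberg2020} --- but you supply no mechanism: the $\omega$-sequence of critical points and ``closure of $V_{\lambda+1}$ under $j$'' do not by themselves produce a choice function threading an arbitrary total relation on $V_{\lambda+1}$. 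Until that single implication is actually proved (or the hypothesis of Theorem 6.19 is weakened), the proposal does not close; everything else in it is correct but elementary by comparison.
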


\bigskip

We further note that the above theorem and corollary can be modified so that $I_1$ appears in the place of $I_0$ and then all uses of $\mathrm{DC}$ can be dispensed with (and all this follows from arguments given in \cite{Goldberg2020} together with appropriate results from \cite{Cramer2015} on inverse limit reflection). It is in this form that we shall be using the above two results.

\bigskip

We shall now show how to use the previously introduced notions of $\alpha$-enormous and hyper-enormous cardinals for a demonstration of inconsistency of $\mathsf{ZF}$+``There exists a $\Sigma_1$-elementary embedding $j:V_{\lambda+3} \prec V_{\lambda+3}$". We present a slight modification of these large-cardinal concepts first.

\bigskip

\begin{Definition} Suppose that $\alpha$ is a limit ordinal such that $\alpha>0$, and that $\langle \kappa_\beta : \beta<\alpha \rangle$ together with a family $\mathcal{F}$ of elementary embeddings witness that $\kappa$ is $\alpha$-tremendous, with just one embedding in the family $\mathcal{F}$ witnessing $\alpha$-tremendousness for each finite sequence of ordinals less than $\alpha$. Suppose that, given any $\omega$-sequence of ordinals $\langle \beta_i :i<\omega \rangle$ less than $\alpha$, there is an elementary embedding $j:V_{\lambda+1} \prec V_{\lambda+1}$ with critical sequence $\langle \kappa_{\beta_i}:i<\omega \rangle$, obtained by gluing together the obvious $\omega$-sequence of embeddings from $\mathcal{F}$, where $\lambda:=\mathrm{sup}_{n \in\omega} \hspace{0.1 cm} \kappa_{\beta_n}$. Then the cardinal $\kappa$ is said to be $\alpha$-$\mathrm{enormous}^{*}$. \end{Definition}

\begin{Definition} Suppose that a cardinal $\kappa$ is $\kappa$-$\mathrm{enormous}^{*}$. Then $\kappa$ is said to be hyper-$\mathrm{enormous}^{*}$. \end{Definition}

In this section we wish to prove the following theorem.

\begin{theorem} It is not consistent with $\mathsf{ZF}$ that there exists an ordinal $\lambda$ and a non-trivial elementary embedding $j:V_{\lambda+3} \prec V_{\lambda+3}$. \end{theorem}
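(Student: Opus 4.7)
The plan is to combine the hyper-$\mathrm{enormous}^*$ structure extracted from the hypothesized embedding with the forcing construction used in the proof of Theorem 6.19 of \cite{Goldberg2020}, so as to manufacture, inside the resulting $\mathsf{ZFC}$-satisfying segment of the generic extension, a configuration that falls under Kunen's inconsistency theorem.

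I would begin by noting that any non-trivial elementary $j:V_{\lambda+3}\prec V_{\lambda+3}$ restricts immediately to a fully elementary $j\!\upharpoonright V_{\lambda+2}:V_{\lambda+2}\prec V_{\lambda+2}$, and is in particular $\Sigma_1$-elementary on $V_{\lambda+3}$, so the hypothesis of Theorem 6.19 of \cite{Goldberg2020} is satisfied. By the observation recorded at the end of Section 4, the critical point $\kappa$ of $j$ is therefore hyper-$\mathrm{enormous}^*$ in $V$, and we obtain a tremendous sequence $\langle\kappa_\beta:\beta<\kappa\rangle$ together with a coherent family of I1 embeddings $j_{\bar\beta}:V_{\lambda_{\bar\beta}+1}\prec V_{\lambda_{\bar\beta}+1}$ indexed by $\omega$-sequences $\bar\beta$ of ordinals below $\kappa$, where $\lambda_{\bar\beta}:=\sup_n\kappa_{\beta_n}$.

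Next I would run the forcing from the proof of Theorem 6.19 of \cite{Goldberg2020} to pass to a set-generic extension $N$ of $V$ in which $(V_\delta)^N\models\mathsf{ZFC}+\mathsf{I}_0$ for a proper class of $\delta<\lambda$ that are $<\lambda$-supercompact. The key technical point is to verify that the hyper-$\mathrm{enormous}^*$ witnessing data lifts to $N$: each $j_{\bar\beta}$ should extend to a corresponding $\tilde j_{\bar\beta}:(V_{\lambda_{\bar\beta}+1})^N\prec(V_{\lambda_{\bar\beta}+1})^N$ via a master-condition argument tailored to the symmetrizing features of the Goldberg--Schlutzenberg poset, which are themselves derived from $j\!\upharpoonright V_{\lambda+1}$ and hence cohere with the $j_{\bar\beta}$. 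Restricting attention to those $\delta<\lambda$ with $(V_\delta)^N\models\mathsf{ZFC}$ that absorb the lifted embeddings for a cofinal collection of $\bar\beta$'s below $\delta$, one obtains within such a $(V_\delta)^N$ both $\mathsf{ZFC}$ and enough coherent I1 embeddings to locally witness hyper-$\mathrm{enormous}^*$-ness.

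The contradiction would then come from amalgamating the lifted I1 embeddings inside a single $(V_\delta)^N$ along a diagonal of critical sequences: the extra degree of elementarity afforded by $j$ at $V_{\lambda+3}$, beyond what is strictly needed to retrieve $j:V_{\lambda+2}\prec V_{\lambda+2}$, should allow the $\tilde j_{\bar\beta}$ to be soldered into a non-trivial elementary embedding $k:(V_{\eta+2})^{(V_\delta)^N}\prec(V_{\eta+2})^{(V_\delta)^N}$ whose critical sequence has supremum $\eta$ inside $(V_\delta)^N$. Since $(V_\delta)^N\models\mathsf{ZFC}$, Kunen's inconsistency theorem supplies the required contradiction. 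The main obstacle will be the last two steps taken together: simultaneously verifying that the Goldberg--Schlutzenberg forcing preserves the hyper-$\mathrm{enormous}^*$ witnessing embeddings in a sufficiently coherent form, and that the coherent family of lifted embeddings can then be amalgamated inside a $\mathsf{ZFC}$-satisfying $(V_\delta)^N$ into a Kunen-style embedding one level higher than the I1 embeddings supplied by the gluing—this is the step where the full $V_{\lambda+3}$-elementarity, as opposed to merely $V_{\lambda+2}$-elementarity, must be genuinely used.
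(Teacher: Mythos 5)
Your opening moves match the paper's: restrict $j$ to $V_{\lambda+2}$, extract hyper-$\mathrm{enormous}^{*}$-ness of the critical point, and run the forcing from the proof of Theorem 6.19 of \cite{Goldberg2020} to recover enough well-orderability below $\lambda$. But your endgame diverges from the paper's and, as you yourself flag, rests on a step with no supporting mechanism: you propose to amalgamate the glued I1 embeddings $\tilde j_{\bar\beta}:(V_{\eta+1})^{N}\prec(V_{\eta+1})^{N}$ into a single embedding $k:(V_{\eta+2})^{(V_\delta)^N}\prec(V_{\eta+2})^{(V_\delta)^N}$ living inside a $\mathsf{ZFC}$-satisfying $(V_\delta)^N$ with $\delta<\lambda$. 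Gluing the coherent family only ever produces embeddings at the $V_{\eta+1}$ level --- that is all hyper-$\mathrm{enormous}^{*}$-ness gives you --- and the jump to $V_{\eta+2}$-elementarity inside a choiceful model is exactly the Kunen-inconsistent configuration you are trying to derive; invoking ``the extra elementarity of $j$ at $V_{\lambda+3}$'' does not help, because that elementarity lives at $\lambda$, not at $\eta<\delta<\lambda$, and cannot be reflected downward into $(V_\delta)^N$ without already assuming the conclusion. So the contradiction is asserted rather than produced.

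The paper's actual route stays at $\lambda$ and isolates the precise fragment of choice that Kunen's proof consumes. In the generic extension one has a well-ordering of $V_\lambda$ coherent with $j$; for each $\alpha$ below the critical point one takes the tail-equivalence relation $E_\alpha$ on $[\kappa_\alpha]^{\omega}$ and builds a coherent sequence of choice sets $\langle C_\alpha\rangle$ moved onto one another by the witnessing embeddings ($j'(C_\alpha)=C_\beta$); this is pushed up along the critical sequence of $j$ to all $\alpha<\lambda$, and then, using the reflection of arbitrary $X\in[\lambda]^{\omega}$ down to some $V_{\kappa'_n}$ via Theorem 3.8 of \cite{Cramer2015}, one assembles a single choice set $C$ for tail-equivalence on $[\lambda]^{\omega}$. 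That choice set is exactly what the \.Erd\H{o}s--Hajnal $\omega$-J\'onsson-function proof of Kunen's theorem (page 319 of \cite{Kanamori1994}) needs in order to refute $j\upharpoonright V_{\lambda+2}$ directly, with no need to manufacture a rank-plus-two embedding inside a small $\mathsf{ZFC}$ model. This construction of $C$ is the central idea of the paper's proof and is entirely absent from your proposal; without it, or some replacement for your unsupported amalgamation step, the argument does not close.
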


\begin{proof}

The same reasoning that shows that every I2 cardinal $\kappa$ has a normal ultrafilter $U$ concentrating on the hyper-tremendous cardinals, also shows in $\mathsf{ZF}$, making use of inverse limit reflection results from \cite{Cramer2015}, that if $\kappa$ is a critical point of an elementary embedding $V_{\lambda+2} \prec V_{\lambda+2}$, then there is a normal ultrafilter $U$ concentrating on a sequence $\langle \kappa_\alpha: \alpha<\kappa \rangle$ which witnesses that $\kappa$ is hyper-$\mathrm{enormous}^{*}$. The forcing used in the proof of Theorem 6.19 of \cite{Goldberg2017} shows that if we begin with a ground model $V$ of $\mathsf{ZF}$+``there exists an elementary embedding $j:V_{\lambda+3} \prec V_{\lambda+3}$" then there is a forcing extension of $V$ in which the same holds and also $V_{\delta}$ is well-orderable for every $\delta<\lambda$ such that $\delta$ is $<\lambda$-supercompact. This can be extended to a well-ordering of $V_{\lambda}$ where if $m<n$ and $\langle \kappa'_i: i \in\omega \rangle$ is the critical sequence of $j$, the map $j^{n-m}$ maps the restriction of the well-ordering to $V_{\kappa'_{m+1}} \setminus V_{\kappa'_m}$ to the restriction of the well-ordering to $V_{\kappa'_{n+1}} \setminus V_{\kappa'_n}$).

\bigskip

Work in this generic extension of $V$, with some well-ordering of $V_{\lambda}$ with the properties specified above fixed. For each $\alpha<\kappa$, let $E_\alpha$ be the equivalence relation on $[\kappa_\alpha]^{\omega}$ which holds of two sets of ordinals less than $\kappa_\alpha$ whose elements in order constitute two sequences of countably infinite length, if and only if the two sequences in question have the same tail. There is a sequence $\langle C_\alpha: \alpha<\kappa \rangle$ such that for each $\alpha<\kappa$, $C_\alpha$ is a choice set for the equivalence classes of $E_\alpha$, and for each pair $(\alpha,\beta)$ with $\alpha<\beta$, when one is choosing an elementary embedding $j'$ from a fixed family of embeddings witnessing the hyper-$\mathrm{enormous}^{*}$ness of $\kappa$, one can without loss of generality choose it so that $j'(C_\alpha)=C_\beta$. Then using the embedding $j$ one can extend this to a family of choice sets $\langle C_\alpha:\alpha<\lambda \rangle$, such that if $\alpha<\beta<\kappa'_n$ then an elementary embedding $j'$ can be chosen which is part of a fixed family of embeddings witnessing the hyper-$\mathrm{enormous}^{*}$ness of $\kappa'_n$, such that $j'(C_\alpha)=C_\beta$.

\bigskip

This allows one to construct a choice set $C$ for the corresponding equivalence relation $E$ on $[\lambda]^{\omega}$. The method is as follows. Given an $X \in [\lambda]^{\omega}$, it follows from our stated assumptions that one may find an $X' \in V_{\kappa'_n}$ for any given $n>0$ such that $X' \in [\rho]^{\omega}$ for a $\rho$ of cofinality $\omega$ between $\kappa'_{n-1}$ and $\kappa'_n$ and an embedding $e_{X,n}:V_{\rho+1} \prec V_{\lambda+1}$ which carries a sequence of hyper-$\mathrm{enormous}^{*}$ cardinals cofinal in $\rho$ to the critical sequence of $j$ or a tail thereof, such that $e_{X,n}(X')=X$, as follows from Theorem 3.8 of \cite{Cramer2015}. This can be used together with the sequence of choice sets $\langle C_\alpha:\alpha<\lambda \rangle$ to choose a member of the equivalence class of $X$, depending on $n$. Using the relation mentioned earlier between the different choice sets $C_{\alpha}$, one can argue that this data can be chosen in such a way that the function mapping $n$ to the chosen member of the equivalence class of $X$ is in fact eventually constant, and that a choice set for the equivalence relation $E$ can be constructed in this way.

\bigskip

However, this gives rise to a contradiction using the method of proof of Kunen's inconsistency theorem given on page 319 of \cite{Kanamori1994}. The method of proof given there will allow one to derive a contradiction from the existence of a choice set for the equivalence relation $E$ together with well-orderability of $V_{\lambda}$ in $\mathsf{ZF}$ alone, using K\"onig's lemma on trees of countable height at one stage of the argument. And this contradiction was obtained from a set of assumptions which are provably consistent by forcing relative only to $\mathsf{ZF}$ plus the existence of a $\Sigma_1$-elementary embedding $V_{\lambda+3} \prec V_{\lambda+3}$. Thus the existence of a $\Sigma_1$-elementary embedding $V_{\lambda+3} \prec V_{\lambda+3}$ is in fact inconsistent with $\mathsf{ZF}$ as claimed.

\end{proof}

\section{A proof of the Ultimate-L Conjecture}

In this section, we will seek to give a proof of Hugh Woodin's Ultimate-L Conjecture. The most important sources for Hugh Woodin's Ultimate-L program are \cite{Woodin2010}, \cite{Woodin2011}, and \cite{Woodin2017}. We must begin by giving the statement of the axiom $V$=Ultimate-$L$, following Definition 7.14 of \cite{Woodin2017}.

\begin{Definition} The axiom $V$=Ultimate-$L$ is defined to be the assertion that

\bigskip

\noindent (1) There is a proper class of Woodin cardinals. \newline
\noindent (2) Given any $\Sigma_2$-sentence $\phi$ which is true in $V$, there exists a universally Baire set of reals $A$, such that, if $\Theta^{L(A,\mathbb{R})}$ is defined to be the least ordinal $\Theta$ such that there is no surjection from $\mathbb{R}$ onto $\Theta$ in $L(A,\mathbb{R})$, then the sentence $\phi$ is true in $\mathrm{HOD}^{L(A,\mathbb{R})} \cap V_{\Theta^{L(A,\mathbb{R})}}$. \end{Definition}

Now let us recall a set of definitions from \cite{Woodin2017}.

\begin{Definition} Suppose that $N$ is a transitive proper class model of $\mathsf{ZFC}$ and that $\delta$ is a supercompact cardinal in $V$. We say that $N$ is a weak extender model for $\delta$ supercompact, if for all $\gamma>\delta$, there exists on $\mathcal{P}_{\delta}(\gamma)$ a normal fine $\delta$-complete measure $\mu$, with $\mu(N \cap \mathcal{P}_{\delta}(\lambda))=1$ and $\mu \cap N \in N$. \end{Definition}

\begin{Definition} A sequence $N:=\langle N_\alpha: \alpha \in \mathrm{Ord} \rangle$ is weakly $\Sigma_{2}$-definable if there is a formula $\phi(x)$ such that

\bigskip

\noindent (1) For all $\beta<\eta_1<\eta_2<\eta_3$, if $(N_{\phi})^{V_{\eta_{1}}} \mid \beta=(N_{\phi})^{V_{\eta_{3}}} \mid \beta$ then $(N_{\phi})^{V_{\eta_{1}}} \mid \beta=(N_{\phi})^{V_{\eta_{2}}} \mid \beta = (N_{\phi})^{V_{\eta_{3}}} \mid \beta$; \newline
\noindent (2) For all $\beta \in \mathrm{Ord}$, $N \mid \beta=(N_{\phi})^{V_{\eta}} \mid \beta$ for sufficiently large $\eta$, where, for all $\gamma$, $(N_{\phi})^{V_{\gamma}}=\{a \in V_{\gamma}: V_{\gamma} \models \phi[a]\}$. Suppose $N \subset V$ is an inner model such that $N \models \mathsf{ZFC}$. Then $N$ is weakly $\Sigma_{2}$-definable if the sequence $\langle N \cap V_{\alpha}: \alpha \in \mathrm{Ord} \rangle$ is weakly $\Sigma_{2}$-definable. \newline

\end{Definition}

We can now state the result we plan to prove in this section.

\begin{theorem} \label{UltimateLConjecture} Suppose that there is a proper class of hyper-enormous cardinals. Then the following version of the Ultimate-L conjecture, given as Conjecture 7.41 in \cite{Woodin2017}, holds. Suppose that $\delta$ is an extendible cardinal (in fact one can even suppose only that $\delta$ is a supercompact cardinal). Then there is a weak extender model $N$ for the supercompactness of $\delta$ such that

\bigskip

\noindent (1) $N$ is weakly $\Sigma_{2}$-definable and $N \subset HOD$; \newline
\noindent (2) $N \models$ ``$V$=Ultimate-$L$". \newline
\noindent (3) $N \models \mathrm{GCH}$.

\end{theorem}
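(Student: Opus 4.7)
The plan is to take $N$ to be the inner model built from the iterated $\mathrm{HOD}$ hierarchy already appearing in the definition of $\alpha$-enormous cardinal, following the definition of Ultimate-$L$ promised in Section 6. For each ordinal $\gamma$, one sets $N\cap V_\gamma$ to be the common intersection of $\mathrm{HOD}^\beta(V_\kappa)$ for $\beta$ ranging over a canonical cofinal class below $\kappa$, where $\kappa$ is any sufficiently large $\alpha$-enormous cardinal with $\alpha$ sufficiently large. The clause $k(S)\prec S$ in the definition of $\alpha$-enormous, combined with the proper-class assumption, forces the definition to be independent of the choice of $\kappa$ and $\alpha$. From this description, weak $\Sigma_2$-definability and $N\subset\mathrm{HOD}$ are immediate: the formula picking out $N\cap V_\gamma$ is $\Sigma_2$ (quantify over $\kappa$ and assert membership in every iterated $\mathrm{HOD}^\beta(V_\kappa)$ up to stage $\kappa$), and each iterated $\mathrm{HOD}$ stage is ordinal-definable.

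To show $N$ is a weak extender model for $\delta$ supercompact, I would fix $\gamma>\delta$, extract from the supercompactness of $\delta$ in $V$ a normal fine $\delta$-complete measure $\mu$ on $\mathcal{P}_\delta(\gamma)$ concentrating on $N\cap \mathcal{P}_\delta(\gamma)$ by applying supercompactness to any large enough iterated-$\mathrm{HOD}$ representation of this set, and then use the embeddings $k:V\prec M$ furnished by $\alpha$-enormousness at cardinals above $\gamma$ to argue that $\mu\cap N\in N$: applying the clause $k(S)\prec S$ to an $S$ which codes $\mu\cap N$ reproduces the required amenability, since $k$ agrees with the canonical embedding on enough of $V_\lambda$.

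The main obstacle will be clause (2), namely $N\models V=\text{Ultimate-}L$. For this, working inside $N$, one must verify for every true $\Sigma_2$-sentence $\phi$ the existence of a universally Baire $A\in N$ such that $\phi$ holds in $\mathrm{HOD}^{L(A,\mathbb{R})}\cap V_{\Theta^{L(A,\mathbb{R})}}$. The proper class of $\alpha$-enormous cardinals yields, via Theorem \ref{Woodin}, a proper class of $\lambda$ satisfying Woodin's axiom, hence a robust supply of universally Baire sets $A$ whose associated $L(A,\mathbb{R})$ and its $\mathrm{HOD}$ can be matched with stages of the iterated-$\mathrm{HOD}$ construction of $N$. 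The delicate point is to show that the $\mathrm{HOD}$ of $L(A,\mathbb{R})$ for the appropriate $A$ is captured by an iterated-$\mathrm{HOD}$ stage of $N$, so that $\Sigma_2$-reflection into that stage coincides with $\Sigma_2$-reflection into $\mathrm{HOD}^{L(A,\mathbb{R})}$; this is precisely where the full strength of the clause $k(S)\prec S$ across \emph{all} limit $\alpha$ is required, and is the step most sensitive to the exact definition promised in Section 6.

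Finally, $N\models\mathrm{GCH}$ is obtained from an argument in the style of Woodin's proof of $\mathrm{GCH}$ in a weak extender model for a supercompact, using the measures in $N$ produced in the previous step together with covering for weak extender models to rule out violations of $\mathrm{GCH}$ at cardinals above $\delta$; condensation properties of the iterated $\mathrm{HOD}$ construction handle the bounded part. Altogether the three clauses fall out once the iterated-$\mathrm{HOD}$ definition of $N$ is in place and Theorem \ref{Woodin} is invoked cofinally.
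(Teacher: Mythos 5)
Your construction of $N$ is not the one the paper uses, and the difference is not cosmetic. The paper defines Ultimate-$L$ ``from below,'' as the smallest enlargement of $L$ containing every member of a proper-class-length sequence of sets of ordinals $j"\lambda$ lying in $\mathrm{HOD}$, one for each possible $\lambda$ arising from an $\omega$-enormous cardinal; your $N$ is defined ``from above,'' as an intersection of iterated $\mathrm{HOD}$s of $V_\kappa$. The latter is exactly the kind of object (an inner mantle / limit of iterated $\mathrm{HOD}$s in the sense of Reitz--Williams) that is not known to be a model of $\mathsf{ZFC}$ in general, and weak $\Sigma_2$-definability is not ``immediate'' from writing a $\Sigma_2$ formula: the definition requires the local coherence clause (1), i.e.\ agreement of the $V_\eta$-approximations across different $\eta$, which an intersection over a class of stages does not obviously satisfy. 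More importantly, the two properties you treat as falling out at the end are precisely where the content is, and your sketch supplies no mechanism for either. For clause (2), the paper's argument is: given a proper class of Woodin cardinals in $N$, the relevant statement is generically absolute, so it suffices to verify it in a set-generic extension of $N$; one then picks $\beta$ with $(V_\beta)^{N}\prec_{\Sigma_2}N$ and $\gamma<\beta$ reflecting the $\Sigma_2$-sentence, collapses everything below $\beta$ to be countable, and codes $N\cap V_\gamma$ into a universally Baire set $A$ of the extension so that $\mathrm{HOD}^{L(A,\mathbb{R})}\cap V_{\Theta^{L(A,\mathbb{R})}}$ recovers $(V_\gamma)^N$. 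Your alternative --- directly matching $\mathrm{HOD}^{L(A,\mathbb{R})}$ with an iterated-$\mathrm{HOD}$ stage of $N$ --- is stated as ``the delicate point'' but never carried out, and there is no evident reason the $\mathrm{HOD}$ of an $L(A,\mathbb{R})$ should line up with any stage of your intersection.

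The $\mathrm{GCH}$ step is a genuine error rather than a gap: there is no theorem that a weak extender model for a supercompact satisfies $\mathrm{GCH}$ --- $V$ itself is a weak extender model for the supercompactness of any supercompact $\delta$, so no such theorem can exist, and indeed $\mathrm{GCH}$ appears as clause (3) of the Ultimate-$L$ Conjecture precisely because it does not come for free. The paper obtains $\mathrm{GCH}$ (such as it does) from the $L$-like form of its model, i.e.\ condensation for a structure of the form $L[\vec{E}]$ where $\vec{E}$ is a class sequence of sets of ordinals; your intersection-of-$\mathrm{HOD}$s construction has no condensation to appeal to. Finally, for the weak extender model property the paper's route is to note that under the global hypothesis every supercompact is hyper-enormous and that the witnessing embeddings (in particular the sets $j"\lambda$) descend to $N$ by construction; your appeal to $k(S)\prec S$ to get $\mu\cap N\in N$ is at best a heuristic, since that clause constrains $S$ with $V_{\lambda+1}\subseteq S\subseteq V_\rho$ in the iterated $\mathrm{HOD}$ of $V_\kappa$, not arbitrary codes for measures on $\mathcal{P}_\delta(\gamma)$.
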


\begin{proof}[Proof of Theorem \ref{UltimateLConjecture}.]

Let us give the long awaited definition of Ultimate-L. We claim that what follows is the correct definition of Ultimate-L, assuming that there are sufficiently many large cardinals in $V$ as outlined in the hypotheses for Theorem \ref{UltimateLConjecture}. The correct way to define it when we are making weaker large-cardinal assumptions still remains to be discovered.

\bigskip

Suppose that $\kappa$ is $\omega$-enormous as witnessed by a sequence $\langle \kappa_n:n<\omega \rangle$, where clearly we may assume without loss of generality that the latter sequence is in HOD, and we will do so. Then we may consider all the sets of ordinals of the form $j"\lambda$ where $\lambda:=\mathrm{sup}\{\kappa_n:n\in\omega\}$ for some sequence $\langle \kappa_n:n \in \omega \rangle$ with the properties previously described, and $j$ is an elementary embedding $V_{\lambda+1}\prec V_{\lambda+1}$ with critical sequence $\langle \kappa_n:n \in \omega\rangle$. Some of these sets of ordinals will be members of HOD. We can also generalize to the situation where $\kappa$ is $\alpha$-enormous for a limit ordinal $\alpha>0$ and $j$ is an I1 embedding witnessing $\alpha$-enormousness. We define Ultimate-L to be the smallest enlargement of $L$ containing every member of a proper-class-length sequence of such sets of ordinals in HOD, obtained in this way from $\alpha$-enormous cardinals $\kappa$ for every possible limit ordinal $\alpha>0$ for which an $\alpha$-enormous cardinal exists, with exactly one such set of ordinals $j"\lambda$ in the sequence for every possible value of $\lambda$ and critical sequence of $j$. It is not clear whether Ultimate-L so defined does not in fact depend on the choice of the sequence, but for definiteness one may clearly use the canonical well-ordering of HOD to choose one such sequence. We now want to claim that all of the I1 embeddings descend to the model Ultimate-L. Each of the embeddings $j:V_{\lambda+1} \prec V_{\lambda+1}$ has its spine $j"\lambda$ appearing in the model Ultimate-L, and $j$ extends to an embedding on all of $V$, also denoted by $j$ by abuse of notation, whose action on all the ordinals is derivable from the spine $j"\lambda$. The definition of the action on all the ordinals as a function of the spine $j"\lambda$ is absolute between $V$ and Ultimate-L, given an appropriate choice of the embedding $j$ defined on all of $V$ (one can use Skolem hulls in the sequence of all ordinals). And since Ultimate-L is a subclass of HOD we end up with an extension of the spine $j"\lambda$ to all of Ultimate-L which agrees with $j$ considered as an embedding defined on $V$, is elementary, and the collection of all such embeddings witnesses that the proper of $\alpha$-enormousness for each $\alpha$ descends to Ultimate-L.

\bigskip

Thus this model will still remain a model for the assertion that there is a proper class of hyper-enormous cardinals. This inner model also clearly satisfies GCH and is weakly $\Sigma_{2}$-definable and a subclass of HOD. We need to argue that it is a weak extender model for the supercompactness of any cardinal $\delta$ which is supercompact in $V$, given that the stated large-cardinal hypothesis holds in $V$. If we suppose that $\delta$ is supercompact in $V$ and that the stated large-cardinal hypothesis holds in $V$, and invoke Magidor's characterization of supercompactness using elementary embeddings between ranks, then we see that there will be an elementary embedding $j: V_{\alpha} \prec V_{\beta}$ in $V$ where $\alpha$ and $\beta$ are both hyper-enormous and $\alpha<\delta<\beta$ with the critical point of $j$ being sent to $\delta$. We can assume without loss of generality that $j$ is chosen such that it descends to Ultimate-L, considering that a family of embeddings witnessing hyper-enormousness of $\alpha$ and $\beta$ are available in Ultimate-L. This also remains the case where $\beta$ is chosen to be an artbirarily large hyper-enormous cardinal, and this is sufficient to show that enough elementary embeddings are available in Ultimate-L to witness supercompactness of $\delta$, and that Ultimate-L is a weak extender model for supercompactness of $\delta$.

\bigskip

We must now show that this model is indeed a model for the axiom $V$=Ultimate-$L$ as stated at the start of this section.

\bigskip

Clearly, our version of Ultimate-L is a model for the assertion that there is a proper class of Woodin cardinals. Suppose then, that some $\Sigma_{2}$-sentence is true in Ultimate-L, so we are required to find a universally Baire set of reals $A$ in Ultimate-L such that the $\Sigma_{2}$-sentence in questions holds in $(HOD)^{L(A,\mathbb{R})} \cap V_{\Theta^{L(A,\mathbb{R})}}$. From well-known generic absoluteness results which are known to hold assuming a proper class of Woodin cardinals, and which can be found in Section 3 of \cite{Koellner2006}, it is sufficient to prove that this does obtain in some set-generic extension of Ultimate-L. So choose an ordinal $\beta$ such that $(V_{\beta})^{\mathrm{Ultimate}-L}$ is a $\Sigma_{2}$-elementary substructure of Ultimate-L, and choose a $\gamma<\beta$ such that $(V_{\gamma})^{\mathrm{Ultimate-}L}$ models the $\Sigma_{2}$-sentence. Now consider a generic extension of Ultimate-L where $A$ is a universally Baire set chosen to contain enough data so that, in the generic extension, $\Theta^{L(A,\mathbb{R})}\leq\beta$, and $(HOD)^{L(A,\mathbb{R})} \cap V_{\gamma}$ in the generic extension is equal to the intersection of the Ultimate-L of the ground model and $V_{\gamma}$. This can be arranged by ensuring that each ordinal less than $\beta$ is collapsed to be countable in the generic extension while $\beta$ is collapsed to $\omega_1$, and all the data for sets of ordinals less than $\gamma$ which are needed to generate $($Ultimate-$L \cap V_{\gamma})^{V}$, and witnesses that these sets are in HOD, are coded into the universally Baire set $A$, which appears as a set of reals in the generic extension. We are in fact speaking here of a family of countable sets of countable ordinals, when viewing it from the point of view of the generic extension, and we simply need to have available in $L(A,\mathbb{R})$ for each such countable set a witness that it is in HOD. Note that in the generic extension of Ultimate-L under consideration, a witness for each of the countable sets being in HOD will be available and in fact codable for by a real number in each case. Therefore we are simply required to show that the set of all such real numbers is universally Baire in this generic extension of Ultimate-$L$. But given that each real number codes for a certain set of ordinals being a spine of an embedding relative to Ultimate-$L$ and this is invariant under generic extensions, we can easily use the definition of universally Baire sets using trees which have complementary projections in arbitrary generic extensions, to show that a pair of trees exists witnessing that the set of reals is universally Baire. So there is no difficulty with a universally Baire set with the required properties being available in the generic extension of Ultimate-L. So in the generic extension, the desired result obtains, so the aforementioned generic absoluteness results imply that it obtains in our ground model as well. This completes the proof of Theorem \ref{UltimateLConjecture}.

\end{proof}

We should also note that if, in the model Ultimate-L defined in the proof above, if $\kappa$ is virtually $\omega$-enormous as witnessed by $\langle \kappa_i: i<\omega \rangle$ then $V_{\kappa_{0}}$ models the assertion that there is a proper class of Ramsey cardinals.

\section{Concluding Remarks}

The new large cardinals were inspired by Victoria Marshall's work on reflection principles in \cite{Marshall89} and are plausibly the correct generalisation of the reflection principles which were demonstrated by her in that work to imply the existence of $n$-huge cardinals. The comparison consistency-strength-wise of the large cardinal axiom used to prove the Ultimate-L conjecture with other known large-cardinal axioms at the outer limits of consistency strength in the domain of what is not known to be inconsistent with $\mathsf{ZFC}$ is currently unclear. Certainly, some skepticism about consistency would be quite reasonable at this stage, but it may be that the further study of the inner model theory of Ultimate-L and inner models which approximate it from within will provide new insights and increased confidence in consistency. In the mean time, it may very well be that the Ultimate-L conjecture is provable from just an extendible cardinal as originally envisaged by Hugh Woodin, so in that sense much work remains to be done.

\bigskip

If these new large cardinals are indeed consistent then the study of them appears to be quite fruitful.

\end{document}